\numberwithin{equation}{section}
\newtheorem{thm}{Theorem}[section]
\newtheorem{lemma}[thm]{Lemma}
\newtheorem*{Leucking}{Luecking's Theorem }
\newtheorem*{cor1prime}{Corollary 1' }
\newtheorem*{mthm}{Main Theorem}
\newtheorem*{proof-mthm}{Proof of Main Theorem}
\newtheorem*{proofcor2}{Proof of Corollary 2}
\newcommand{\C}{{\mathbb C}}
\newcommand{\E}{{\mathbb E}}
\newcommand{\Castinfty}{{{{\mathbb C}}^\infty}}
\newcommand{\A}{{\mathcal A}(\S)}
\newcommand{\T}{{\mathbb T}}
\newcommand{\D}{{\mathbb D}}
 \newcommand{\blem}{\begin{lemma}}
 \newcommand{\elem}{\end{lemma}}
\newcommand{\bpve}{\begin{proof}}
\newcommand{\epve}{\end{proof}}
\def\B{\mathbb{B}}
\def\C{\mathbb{C}}
\def\D{\mathbb{D}}
\def\H{\mathbb{H}}
\def\cD{\overline{\mathbb{D}}}
\def\N{\mathbb{N}}
\def\S{\mathbb{S}}
\def\Sde{\mathbb{S} \times \mathbb{S} }
\def\Ptau{\mathcal{P_\tau}}
\newtheorem{theorem}{Theorem}
\theoremstyle{plain}
\newtheorem{corollary}{Corollary}
\newtheorem{definition}{Definition}
\newtheorem{proposition}{Proposition}
\newtheorem{remark}{Remark}
\numberwithin{equation}{section}
\begin{document}

\title{  A Covariance Equation}
\author{El Hassan Youssfi}
\email[E.H. Youssfi]{el-hassan.youssfi@univ-amu.fr}
 
\thanks{In memory of Peter Maserick}
\keywords{Toeplitz operator, finite rank, covariance equation, generalized Laplace transforms, harmonic analysis on semigroups, Bergman kernel}

\begin{abstract} Let $\S$ be a  commutative semigroup  with identity $e$ and  let $\Gamma$ be a compact subset  in the pointwise convergence topology of the space $\S'$ of all non-zero multiplicative functions on $\S.$  Given a continuous  function $F: \Gamma \to \C$ and a  complex regular Borel measure $\mu$ on $\Gamma$ such that $\mu(\Gamma) \not = 0.$  It is shown that    $$ \mu(\Gamma) \int_{\Gamma} \varrho(s)  \overline{\varrho(t)} |F|^2(\varrho) d\mu(\varrho) =  \int_{\Gamma} \varrho(s) F(\varrho) d\mu(\varrho) \int_{\Gamma}  \overline{\varrho(t) F(\varrho)} d\mu(\varrho) $$
for all $(s, t) \in \S\times \S$ if and only if for some $\gamma \in \Gamma, $  the support of 
 $\mu$ is contained is contained in $\{ F  = 0 \} \cup \{\gamma\}$.   Several applications of this characterization are derived.  In particular, the reduction of our theorem to the semigroup of non-negative integers $(\N_{0}, +)$ solves a problem  posed  by El Fallah, Klaja, Kellay, Mashregui and Ransford in a more general context.
 More consequences of our results are given, some of them illustrate the probabilistic flavor behind the problem studied herein  and others establish extremal properties of analytic kernels.   
\end{abstract}

\maketitle
\section{Introduction and statement of the main results} Let $\S$ be a commutative semigroup with identity $e$ and let
$\S'$ be the completely regular space consisting of all non-zero multiplicative complex valued functions on $\S$ furnished  with the pointwise convergence topology.
In this paper, we characterize the compactly supported complex regular Borel measures  $\mu$ on $\S'$   that satisfy  the following  equation
\begin{eqnarray*} \mu(\S') \int_{\S'} \varrho(s) \overline{\varrho(t)} |F(\varrho)|^2d\mu(\varrho) = \int_{\S'} \varrho(s) F(\varrho) d\mu(\varrho) \int_{\S'}  \overline{\varrho(t) F(\varrho)}d\mu(\varrho)  \end{eqnarray*}

for all $s, t \in \S$, where $F$ is a fixed continuous function on $\S'.$  If $ \mu(\S') =1,$ then  setting $\chi_s (\varrho):= \varrho(s)$, the latter equality can be written in the {\it covariance equation} form
\begin{eqnarray*} \int_{\S'}\left( F\chi_s  - \int_{\S'}F\chi_s d\mu\right) 
\left( \overline{F\chi_t}  - \int_{\S'}\overline{F\chi_t} d\mu\right) d\mu = 0
  \end{eqnarray*}
for all $s, t \in \S$.

   To handle this problem we  first use the action of the algebra of shift operators on functions on $\Sde.$ Then  we appeal to Luecking's theorem on finite rank Toeplitz operators \cite{L}.
    Several  applications of our solution will be given in Sections 5 and 6.  They all offer new results. The reduction of our charaterization to the additive semigroup $\N_{0}$ of all non-negative integers  solves a problem which was left open in recent paper by El Fallah, Kallaj, Kellay, Mashregui and Ransford in a more general context.  Indeed, we provide a solution to this question in the multi-dimensional setting.  
     
     Let ${\S}$   denote a  multiplicative commutative  semigroup  with
identity $e$. A function $\varrho : \S \to \C$ is said to be  multiplicative if its satisfies
$\varrho(st) = \varrho(s)\varrho(t)$ for all $(s,t) \in \S^2$. It is clear, that if $\varrho$ is non-zero  multiplicative on $\S$, then $\varrho(e) = 1.$ We denote by $\S'$ the set of all non-zero multiplicative complex valued functions on $\S$ and equip it  with pointwise convergence topology under which it is a completely regular topological space.

 Let $\mu$ be a compactly supported complex regular Borel measure $\mu$ on  $\S'$. It is known (see  Rudin  \cite{Ru})  that $\mu$ has a  polar decomposition 
 $\mu = h|\mu|$,  where $|\mu|$ is a nonnegative measure, the total variation of $\mu$, on $\S^{\ast}$ and $h = h_{\mu} : \S' 
\rightarrow \C$ a Borel measurable function with $|h| \equiv 1$.

The  generalized Laplace transform of 
$\mu$ is the function defined on $\Sde$ by
\begin{eqnarray}\label{laplace}\widehat{\mu}(s, t) := \int_{ \S'}\varrho(s) \overline{\varrho(t)}\, d\mu(\varrho), \ (s,t) \in \Sde. \end{eqnarray}and the variation of  $\widehat{\mu}$ is the function
$$ \widehat{|\mu|}(s, t) := \int_{ \S'}\varrho(s)\overline{\varrho(t)} \, d|\mu|(\varrho), \ (s,t) \in \Sde. $$

 The above generalized Lapalace transforms can be viewed as $BV$-functions in the sense of Maserick \cite{M2} on the semigroup on $ \Sde$ with appropriate involution.

If $F$ is a continuous  function on $\S'$, we denote by $F\mu$ the measure with density $F$ with respect to $\mu.$
\begin{definition}
Given complex Borel measure $\mu$ on a compact subset $\Gamma$ of $\S'$ and a continuous function $F: \Gamma \to \C$, we shall say that the measure $\mu$ satisfies {\it the covariance equation with respect to $F$}  if

\begin{eqnarray}\label{main-equality} \mu(\Gamma) \widehat{ |F|^2\mu}(s, t) =  \widehat{ F\mu}(s, e)  \widehat{ \overline{F}\mu}(e, t)\end{eqnarray}
for all $s, t \in \S$.
\end{definition}

It is obvious that if  $\mu\left(\Gamma\right) = 0, $  then $\mu$ satisfies (\ref{main-equality}) if and only if either
 \begin{eqnarray}\label{main-equality1}   \int_{\Gamma} \varrho(s) F(\varrho) d\mu(\varrho) =  0 \ \ \forall s\in \S  \end{eqnarray}
 or 
 \begin{eqnarray}\label{main-equality2}    \int_{\Gamma}  \overline{\varrho(s)F(\varrho) }d\mu(\varrho) = 0, \ \ \forall s\in \S.
 \end{eqnarray}
 Our goal herein is  to describe the measures $ \mu$ which satisfy     (\ref{main-equality}).  The 
 main result is the following:
\begin{mthm}\label{main-theorem} Suppose that $\mu$  is a complex regular Borel measure  with compact support $\Gamma \subset \S'$ 
 such that 
$\mu\left(\Gamma \right) \not = 0$   and let
$F: \Gamma \to \C$ be a continuous function  such that 
 $F\mu \not =0$. 
Then the following are equivalent
\begin{enumerate}
\item  The measure  $\mu$ satisfies the covariance equation with respect to $F$.
\item The measure  $|\mu|$ satisfies the covariance equation with respect to $F$.
\item   There are a constant $c\in \C\setminus \{0\}$  and an element $\gamma \in \Gamma$ such that $F(\gamma) \not = 0$ and 
\begin{eqnarray}\label{mu} \mu := c\delta_\gamma,\end{eqnarray}
where $\delta_\gamma$ is the Dirac measure at $\gamma.$
\end{enumerate}
\end{mthm}
Indeed, the constant $c$ and the multiplicative function $\gamma$ will be given explicitly in terms of $F$ and $\mu$ by the equalities

 \begin{eqnarray} \label{cgamma} & c  := \mu(\Gamma) 
   \ \ \text{and} \ \  \gamma (s)  := \frac{\widehat{ F\mu}(s, e)}{ \widehat{ F\mu}(e, e)}, s \in \S.
\end{eqnarray}
  
 As a corollary of this,  we obtain the following:
 \begin{corollary}\label{main-corollary} Let $\mu$ be a compactly supported complex regular  Borel measure  on $\S'$  such that
$\mu\left(\S' \right) \not = 0$, 
and let $$\gamma(s):= \frac{\widehat{ \mu}(s, e)}{ \mu\left(\S' \right)}, s \in \S.$$ Then $\mu$ satisfies the equality \begin{eqnarray*} \mu(\S') \int_{\S'} \varrho(s) \overline{\varrho(t)} d\mu(\varrho) = \int_{\S'} \varrho(s) d\mu(\varrho) \int_{\S'}  \overline{\varrho(t) }d\mu(\varrho)  \end{eqnarray*}
for all $s, t \in \S$  if and only if   $\gamma \in \S'$  and  $\mu = \mu(\S') \delta_\gamma$. 
\end{corollary}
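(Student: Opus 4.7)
The plan is to deduce the corollary by applying the Main Theorem to the constant function $F\equiv 1$ on $\Gamma := \mathrm{supp}\,\mu$. Since $\mu$ is compactly supported, $\Gamma$ is a compact subset of $\S'$, and the hypothesis $\mu(\S')\neq 0$ gives $\mu(\Gamma)=\mu(\S')\neq 0$, while $F\mu=\mu\neq 0$. Thus all the assumptions of the Main Theorem are in force.

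With this choice, the covariance equation (\ref{main-equality}) collapses to exactly the equation appearing in the corollary: $|F|^{2}=1$ makes the left-hand sides match, and $F\mu=\mu$, $\overline{F}\mu=\mu$ make the right-hand sides match. Therefore, by the equivalence (1)$\Leftrightarrow$(3) of the Main Theorem, the stated equation holds if and only if $\mu=c\,\delta_{\gamma_0}$ for some $c\in\C\setminus\{0\}$ and some $\gamma_0\in\Gamma\subset\S'$. The explicit formulas (\ref{cgamma}) then identify these data: $c=\mu(\Gamma)=\mu(\S')$ and
\[
\gamma_0(s)=\frac{\widehat{F\mu}(s,e)}{\widehat{F\mu}(e,e)}=\frac{\widehat{\mu}(s,e)}{\widehat{\mu}(e,e)}=\frac{\widehat{\mu}(s,e)}{\mu(\S')},
\]
which is precisely the function $\gamma$ defined in the corollary. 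In particular, the equation forces $\gamma\in\S'$, since $\gamma_0\in\Gamma\subset\S'$, and simultaneously $\mu=\mu(\S')\,\delta_{\gamma}$. This establishes the ``only if'' direction.

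The converse is a direct verification, requiring no appeal to the Main Theorem. Assuming $\gamma\in\S'$ and $\mu=\mu(\S')\,\delta_{\gamma}$, one computes $\int_{\S'}\varrho(s)\overline{\varrho(t)}\,d\mu(\varrho)=\mu(\S')\,\gamma(s)\overline{\gamma(t)}$, $\int_{\S'}\varrho(s)\,d\mu(\varrho)=\mu(\S')\,\gamma(s)$, and $\int_{\S'}\overline{\varrho(t)}\,d\mu(\varrho)=\mu(\S')\,\overline{\gamma(t)}$; both sides of the equation then equal $\mu(\S')^{2}\,\gamma(s)\overline{\gamma(t)}$.

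There is no real obstacle here since the corollary is a pure specialization $F\equiv 1$ of the Main Theorem; the only thing one must be careful about is bookkeeping, namely identifying $\Gamma$ with $\mathrm{supp}\,\mu$ so that $\mu(\Gamma)=\mu(\S')$, and checking that the canonical $\gamma$ produced by (\ref{cgamma}) coincides with the function defined in the corollary's statement, which in turn yields the membership $\gamma\in\S'$ as part of the conclusion rather than as an extra hypothesis in the forward direction.
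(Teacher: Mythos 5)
Your proof is correct and follows exactly the paper's route: the paper likewise obtains this corollary by taking $F\equiv 1$ in the Main Theorem. Your write-up simply makes explicit the bookkeeping (identifying $c$ and $\gamma$ via (\ref{cgamma}) and verifying the converse) that the paper leaves to the reader.
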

As a consequence of the latter result we shall derive the following:
\begin{corollary}\label{thmB}  Let $Y$ be  an integrable complexe random variable such that the expectation $\E[Y]$ is in $\C \setminus\{0\}.$ 
Then a  bounded  complex random vector $X=(X_1, \cdots, X_d)$  satisfies the equation 
\begin{eqnarray}\label{xmn}
 \E[ Y] \E[X^m \overline{X}^nY]  = \E[X^m Y] \E[\overline{X}^n Y] 
\end{eqnarray}
for all multi-indices $m, n \in \N_{0}^d$ if and only $X$  is constant almost surely.
Here,  it is understood that
$$X^m := X_{1}^{m_1} \cdots X_{d}^{m_d}$$
for all  $m = (m_1, \cdots, m_d) \in \N_{0}^d$  
\end{corollary}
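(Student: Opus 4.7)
The plan is to deduce Corollary~\ref{thmB} from Corollary~\ref{main-corollary} by specializing to the additive semigroup $\S = \N_{0}^d$ and transferring the probabilistic formulation to a measure-theoretic one through a pushforward construction.

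\emph{Step 1: Identify the spectrum.} For $\S = \N_{0}^d$ with identity $0$, every non-zero multiplicative function $\varrho : \N_{0}^d \to \C$ is uniquely determined by its values $z_i := \varrho(e_i)$ on the standard basis vectors, and takes the form $\varrho(m) = z^m$ for $m \in \N_{0}^d$. Because pointwise convergence on the (finite) generating set coincides with convergence of each coordinate, this provides a canonical homeomorphism $\S' \cong \C^d$.

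\emph{Step 2: Build the measure.} Given $X$ and $Y$ as in the hypothesis, introduce the pushforward of the complex measure $Y\,dP$ under $X$, namely
\[\mu(A) := \E\bigl[Y \, \mathbf{1}_{\{X \in A\}}\bigr], \qquad A \subseteq \C^d \text{ Borel.}\]
Since $X$ is bounded, $\mu$ is a compactly supported complex regular Borel measure on $\C^d$, with $\mu(\C^d) = \E[Y] \neq 0$. A routine change of variables yields
\[\widehat{\mu}(m,n) = \E[X^m \overline{X}^n Y], \quad \widehat{\mu}(m,0) = \E[X^m Y], \quad \widehat{\mu}(0,n) = \E[\overline{X}^n Y],\]
so the hypothesis~(\ref{xmn}) is \emph{exactly} the covariance equation of Corollary~\ref{main-corollary} for this $\mu$ (equivalently, the Main Theorem with $F \equiv 1$).

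\emph{Step 3: Apply Corollary~\ref{main-corollary} and translate back.} That corollary delivers a point $\gamma \in \C^d$ (given explicitly by (\ref{cgamma})) such that $\mu = \E[Y]\,\delta_\gamma$, which in probabilistic language reads $\E\bigl[Y \, g(X)\bigr] = \E[Y]\,g(\gamma)$ for every bounded Borel $g$. Feeding in $g = \mathbf{1}_{\C^d \setminus \{\gamma\}}$ and using equivalence~(2) of the Main Theorem for the total variation $|\mu|$, one reads off that $X = \gamma$ almost surely. The reverse implication is immediate: if $X = \gamma$ a.s., both sides of (\ref{xmn}) collapse to $\gamma^m\overline{\gamma}^n\,\E[Y]^2$.

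The whole argument is essentially a dictionary between the language of random variables on $(\Omega,P)$ and the language of regular complex measures on $\S'$; the substantive content already sits inside the Main Theorem. The one point that requires care is the compactness of $\operatorname{supp}\mu$ --- this is precisely where the \emph{boundedness} (rather than mere integrability) of $X$ is used, and it is the sole hypothesis that permits the application of Corollary~\ref{main-corollary}.
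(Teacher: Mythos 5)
Your proposal reproduces the paper's proof in all essentials: the same identification $(\N_{0}^d)'\cong\C^d$, the same pushforward measure $\mu(A)=\E\bigl[Y\,\mathbf 1_{\{X\in A\}}\bigr]$, the same observation that (\ref{xmn}) is exactly the covariance equation for this $\mu$ with $F\equiv 1$, and the same appeal to the $\N_{0}^d$ specialization of the Main Theorem to conclude $\mu=\E[Y]\,\delta_\gamma$.

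The one place where you and the paper diverge is the final translation back to ``$X$ is constant almost surely,'' and there your justification does not do the work you claim. Testing against $g=\mathbf 1_{\C^d\setminus\{\gamma\}}$ gives only $\E\bigl[Y\,\mathbf 1_{\{X\neq\gamma\}}\bigr]=0$, and the measure $|\mu|$ to which you then appeal is the total variation of the pushforward of $Y\,dP$, namely $|\E[Y]|\,\delta_\gamma$; it is not the law of $X$, so applying equivalence (2) of the Main Theorem to it yields nothing about $P(X\neq\gamma)$. In fact the implication ``$X_*(Y\,dP)$ is a point mass $\Rightarrow$ $X$ is constant a.s.'' fails without further hypotheses: take $d=1$, let $X$ be uniform on $\{0,1\}$ and $Y=2\cdot\mathbf 1_{\{X=0\}}$; then $\E[Y]=1$, both sides of (\ref{xmn}) equal $0^{m+n}$ for every $m,n$, and $\mu=\delta_0$, yet $X$ is not constant. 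The paper's own proof glosses over the same point (it passes from $\E[Y\mid X=z]\,dP_X=c\,\delta_\zeta$ to $dP_X=\delta_\zeta$, which likewise needs $\E[Y\mid X]\neq 0$ a.s.), so your write-up is faithful to the source; but the step you describe as a routine dictionary translation is precisely the one that requires an additional hypothesis on $Y$ (or on $\E[Y\mid X]$) to be valid.
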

The semigroup behind the latter corollary is $\S = (\N_{0}^d, +)$ introduced in Section 5. Variants of this corollary can be stated and proved for other semigroups, but we will omit to do it.

\section{Maserick's approach to $BV$-functions }

Throughout the sequel, $(\S, e,  \ast)$ will  denote a  multiplicative commutative  semigroup  with
identity $e$  and involution $\ast$ and consider the associated product semigroup $\Sde$ and furnish it with the involution denoted again by $\ast$ and  given by
$$(s, t)^\ast := (t^\ast, s^{\ast}), \ \ \text{for all} \ (s, t) \in \Sde.$$
 For each $(a, b) \in  {\Sde},$ we define
  the shift operator $E_{(a,b)} $  
by $$(E_{(a, b)} f)(s, t) = f(as, bt) $$ for all $ (s, t) \in {\Sde} $ and $ f
\in \ \C^{\Sde}. $ The complex span $ {\A}$
of all such operators is  a commutative algebra with
identity  $E_{(e, e)} = I$  and involution
$$\left(\sum_{(a,b)}\alpha_{(a, b)} E_{(a, b)}\right)^{\ast} =
 \sum_{(a, b)}\overline{\alpha_{(a, b)} } E_{(b, a)}$$
 $(\alpha_{(a, b)} \in \ \C,  (a, b)  \in
{\Sde}). $
The semigroup ${\Sde}$
is  embedded in  ${\A}$ as a Hamel basis.  The algebra constructed in this way
is isomorphic to the ${\rm L}^{1}$-algebra (or semigroup algebra) constructed by Hewitt-Zuckermann [9]. The  function space $\ \C^{\Sde}$ can be 
algebraically
identified with 
the dual ${\A}^{\star}$
of ${\A}$ via $T \rightarrow (Tf)(e, e) $  $ (f
\in \ \C^{\Sde} $ and $T \in {\A}).$ 
This map topologically identifies $ \C^{\Sde} $ equipped with the topology of simple convergence and ${\A}^{\star}$ equipped with the weak*-topology.
Following Berg et al. [2],  ${(\Sde)}^{\ast}$ we will denote the set of all  semicharacters on ${\Sde}. $ That is, ${(\Sde)}^{\ast}$ consists of   the set of all members $\eta$ of $\ \C^{\Sde}$ such that $\eta(e, e) = 1 $ and 
$$\eta(s s', tt')^{\ast}) = \eta(s, t)\overline{\eta(s', t')} \ \ \text{ for all} \  (s, t), (s', t') \in {\Sde}. $$  
 Equipped with the   topology of pointwise convergence, ${(\Sde)}^{\ast}$ is  completely regular.  It is clear that if $\varrho \in \S',$ then
 $$\eta_\varrho(s, t) :=  \varrho(s) \overline{\varrho(t)} , \ (s, t) \in \Sde$$
 is a semicharacter on $\Sde$. Moreover, the mapping $\varrho \mapsto \eta_\varrho$ is a homeomorphism from $\S'$ onto $(\Sde)^\ast.$ This identifies Radon measures on 
  $\S'$ with those on $(\Sde)^\ast.$ 
  
  \begin{definition}  Let  $\tau$ be a subset of $\A$ and let $\Ptau$ be convex cone in $\A$ spanned by positive linear sums of finite products   of elements of $\tau.$   The subset $\tau$ is called admissible the sense of Maserick if \begin{enumerate}
  \item $T^\ast = T$ for each $T\in \tau.$
  \item $I - T\in \Ptau$ for each $T\in \tau.$
  \item $\A$ is spanned by linear sums of finite products   of members of $\tau.$
  \end{enumerate}
 
  \end{definition}
  
 An example of an admissible set is $ \tau := \{ T_{a, \sigma}, a, \in \Sde, \sigma\in\{-1, 1, -i, i\}\}$ where
 \begin{eqnarray}\label{tau} T_{a, \sigma} := \frac{1}{4}\left(I + \frac{\sigma}{2} E_a + \frac{\bar \sigma}{2} E_{a^\ast}\right).\end{eqnarray}
 
  Consider a function $f :\Sde \to \C$  and let $\Omega$  be a collection of subsets of $\Ptau$ that satisfies 
 $$\sum_{T\in \Lambda}  T = I, \ \ \text{for all} \ \ \Lambda \in \Omega,$$
 
$$ \{TT': (T, T') \in \Lambda \times \Lambda \} \in \Omega \ \ \text{for all} \ \ \Lambda \in \Omega$$
and  each element of $\tau$ belongs to some  $\Lambda \in  \Ptau.$ We impose the ordering on $\Ptau$ given by
$$\Lambda_1 \leq \Lambda_2 \Longleftrightarrow  \Lambda_2 =  \Lambda_3 \Lambda_2$$ 
for some $ \Lambda_3 \in \Ptau$.  Then   define 
$$\|f\|_\Lambda := \sum_{T\in \Lambda}  |Tf(e,e)|.$$
It is clear that the function $\Lambda \mapsto \|f\|_\Lambda$ is nondecreasing.  The function $f$ is said to be of bounded variation  (or just a $BV$-function ) in the sense of Maserick if 
$$\|f \| := \lim_{\Lambda} \|f\|_\Lambda < +\infty.$$
We point out that this definition was given by Marserick for arbitrary commutative semigroups with involution.  Moreover, 
  in a series of articles  \cite{M1}, \cite{M2}, \cite{M3}, he proved that $BV$-functions are precisely
  generalized Laplace transforms of compactly supported complex regular Borel measures on the the space of semi-characters.

  To give another way of describing generalized Laplace transforms we recall \cite{BCR} that   a function $f : \Sde \to \C$ is said to be positive definite if 
  $$\sum_{j,k=1}^n c_j \overline{c_k} f(a_j + a_{k}^{\ast}) \ \geq 0$$
  for $n \in \N $ and finite sequences $(c_j) \in \C^n,$ and $(a_j)  \in  (\Sde)^n.$
  It was shown  \cite{M1}, \cite{M2}, \cite{M3} that the space of bounded  $BV$-functions is precisely the complex span of bounded 
  positive definite functions. More generally, it was proved there that $BV$-functions complex linear sums of  
  positive definite functions which are bounded in a certain sense (exponential boundedness).
  
  Finally, using the above,  Corollary 1 can be rephrased as 
   \begin{cor1prime} Let $f: \Sde \to \C$ be a  $BV$-function  in the sense of Maserick  such that $f(e, e) \not =0.$ Then $f$ satisfies the equality 
   $$f(e,e) f(s, t) = f(s, e) f(e, t), \ \ \ \ \ \text{for all} \ \ (s, t) \in \Sde$$
   if and only if there are a constant $c\in \C\setminus\{0\}$ and a multiplicative function $\gamma \in \S'$ such that
   $$f(s,t) = c \gamma(s) \overline {\gamma(t)},    \ \ \text{for all} \ \ (s, t) \in \Sde.$$
\end{cor1prime}
  We point out that in the case of the semigroup $\S := (]0, 1], \wedge)$ where $s\wedge t := \min(s, t)$,  with the  choice of $\tau$ as in (\ref{tau}) then  $BV$-function  on $]0, 1]\times ]0, 1]$  in the sense of Maserick coincide with $BV$-functions in the classical sense. This can be deduced  from \cite{M2}.
   
   \section{Finite rank Toeplitz operators on the unit disc }  
   
  We recall the basic  scheme of Luecking's method \cite{L} about finite rank Toeplitz operators. Let $ \D := \{ z \in \C : |z| < 1\}$  be the unit disc of $\C$ and let $A^2(\D)$ be classical Bergman space of $\D$. This is the space of all analytic functions on $\D$ which are square integrable with respect to the Lebesgue measure $dA$ on $\D.$ The corresponding Bergman kernel is given by
  $$K_{\D}(z, w) := \frac{1}{\pi(1- z\bar w)^2}, z, w \in \D.$$
  Given a finite  complex Borel measure
$\nu$  on   $\D$,   the Toeplitz operator with symbol $\nu$ is defined  on the space $\C[z]$ of all analytic polynomials by 
 $$T_\nu f (z) := \int_{\D}   f(w) K_{\D}(z, w)d\nu(w), z \in \D, \ f \in \C[z].$$
 The measure $\mu$ induces  a sesquilinear form defined  $\C[z] \times \C[z]$,  by
$$B_\mu(P, Q) := \int_{\C} P(z) \overline{Q(z)} d\nu(z)$$
for all $P, Q \in \C[z].$ 
The reproducing formula for the Bergman kernel shows that 
$$B_\mu(P, Q) = \int_{\D}   (T_\nu P)(z) \bar Q(z) dA(z)$$
for all analytic polynomials $P$ and $Q$.
 We are interested in those bounded Toeplitz operator which   on  $A^2(\D)$ which are of finite rank. The characterization of those measures which induce such operators is given  by
   the following result due to Luecking \cite{L}.
\begin{Leucking}\label{rankC}  Suppose that $ \nu $ is a  finite complex Borel measure on $\D$. Then for following are equivalent: \begin{enumerate}
\item  $T_\nu$ has finite rank
 \item $\nu$ has finite support.
 \end{enumerate}
 Moreover, in the affirmative case,  the rank of $T_\nu$ is equal to the number of points in the support of $\nu$.
\end{Leucking}
The latter theorem will be of particular use in the proof of our main theorem.
 \section{Proof of the main results }  

  Throughout this section,  let $\Gamma$ be  a compact subset of  $\S'$ and let $\mu$ be complex Borel measure on $\Gamma$ such that $\mu(\Gamma) =1$.    Then for each    $s\in \S$ the number
  $$|s|_{\Gamma} := \sup_{\varrho \in \Gamma} | \varrho(s)|$$
   is well-defined and finite due to the continuity of the function $\varrho \to \varrho(s).$

   Finally, we point out that if $ \mu$ satisfies the covariance equation with respect to $F$, then   in view of (\ref{main-equality}) we have that
   
  \begin{eqnarray}\label{pp} \int_{ \Gamma }  P(\varrho(s)) \overline{Q(\varrho(t))} |F(\varrho)|^2 d\mu(\varrho) & = \int_{ \Gamma }  P(\varrho(s)) F(\varrho) d\mu(\varrho)\int_{ \Gamma }   \overline{Q(\varrho(t))} \overline{F(\varrho)} d\mu(\varrho)
 \end{eqnarray}
 for all $s, t \in \S$ and all   analytic polynomials $P, Q.$

  Now, fix $s \in \S$ and consider the complex Borel measure $\nu_s$ on $\D$ defined by 
   \begin{eqnarray}\label{nus} \int_{\D} f(z) d\nu_s(z) &:=  \int_{\Gamma} f\left(\frac{\varrho(s)}{2(1+ |s|_\Gamma)}\right) |F(\varrho)|^2 d\mu(\varrho) 
       \end{eqnarray}
  for all compactly supported continuous functions $f$ on $\D.$
 
   Its obvious that the measure $\nu_s$ is supported in the closed disc of $\C$ centered at the origin with radius $1/2.$
 
 \begin{lemma}\label{Toeplitz}  If  $ \mu$ satisfies the covariance equation with respect to $F$, then the Toeplitz operator $T_{\nu_{s}}$ induced by  $\nu_s$ is of rank at most $1.$
 \end{lemma}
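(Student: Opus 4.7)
The plan is to show that the sesquilinear form $B_{\nu_s}(P,Q)$ factorizes as a rank-one form in $(P,Q)$, from which it follows that $T_{\nu_s}$ has rank at most one by density of polynomials in $A^2(\D)$.

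First, I would unpack the definition of $\nu_s$ to get, for any analytic polynomials $P, Q$,
\begin{equation*}
B_{\nu_s}(P,Q) \;=\; \int_{\Gamma} P\!\left(\tfrac{\varrho(s)}{2(1+|s|_\Gamma)}\right) \overline{Q\!\left(\tfrac{\varrho(s)}{2(1+|s|_\Gamma)}\right)} \,|F(\varrho)|^2\, d\mu(\varrho).
\end{equation*}
Introducing the rescaled polynomial $\widetilde P(z) := P\!\left(\frac{z}{2(1+|s|_\Gamma)}\right)$, this becomes an integral in which the arguments of $\widetilde P$ and $\widetilde Q$ are $\varrho(s)$ rather than $\varrho(s)/(2(1+|s|_\Gamma))$.

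Next I would invoke the polynomial version (\ref{pp}) of the covariance equation with $t=s$; here one uses the normalization $\mu(\Gamma)=1$ assumed at the beginning of this section. This identity immediately produces the factorization
\begin{equation*}
B_{\nu_s}(P,Q) \;=\; M(P)\,\overline{M(Q)},\qquad M(R) := \int_{\Gamma} R\!\left(\tfrac{\varrho(s)}{2(1+|s|_\Gamma)}\right) F(\varrho)\, d\mu(\varrho),
\end{equation*}
valid for all polynomials $P,Q$. Thus $B_{\nu_s}$ is a rank-at-most-one sesquilinear form on $\C[z]\times\C[z]$.

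Finally, I would translate this back to the Toeplitz operator. Since $\nu_s$ is supported in the compact disc $\{|z|\le 1/2\}\subset\D$, the operator $T_{\nu_s}$ is bounded on $A^2(\D)$, and the reproducing identity stated in Section 3 gives $\langle T_{\nu_s}P,Q\rangle_{A^2} = B_{\nu_s}(P,Q)$ for all polynomials $P,Q$. The factorization above then says that on the dense subspace $\C[z]$, the range of $T_{\nu_s}$ is contained in the one-dimensional span of the Riesz representer of $\overline{M(\cdot)}$ (or $T_{\nu_s}$ vanishes if $M\equiv 0$). By boundedness and density this persists on all of $A^2(\D)$, so $T_{\nu_s}$ has rank at most one.

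The argument is essentially a direct computation; the only subtle point is the rescaling $z\mapsto z/(2(1+|s|_\Gamma))$ that moves $\varrho(s)$ inside $\D$ so that Luecking's framework applies, after which the covariance hypothesis produces the rank-one structure for free.
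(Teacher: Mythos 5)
Your argument is correct and is essentially the paper's own proof repackaged at the level of the sesquilinear form: both rest on the rescaling of $\varrho(s)$ into the disc of radius $1/2$ and the factorization (\ref{pp}) with $t=s$, the paper carrying out the same computation via the series expansion of the Bergman kernel to exhibit $T_{\nu_s}f$ as a scalar multiple of one fixed element of $A^2(\D)$. One small imprecision: since $\mu$ is a complex measure, the second factor produced by (\ref{pp}) is the conjugate-linear functional $Q\mapsto\int_\Gamma \overline{Q\left(\varrho(s)/(2(1+|s|_\Gamma))\right)}\,\overline{F(\varrho)}\,d\mu(\varrho)$, which is not $\overline{M(Q)}$ unless $\mu$ is real (conjugating $M(Q)$ would also conjugate $\mu$), but this does not affect the rank-one conclusion.
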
 
    \begin{proof} Due to (\ref{pp}),  it follows that for all  $P, Q \in \C[z]$ 
$$ \aligned   \int_{ \Gamma }  P(\varrho(s))  |F(\varrho)|^2 d\mu(\varrho) & = \int_{ \Gamma }  P(\varrho(s)) F(\varrho) d\mu(\varrho)\int_{ \Gamma }  \overline{F(\varrho)} d\mu(\varrho)\\
  \int_{ \Gamma }  \overline{Q(\varrho(s))} |F(\varrho)|^2 d\mu(\varrho) & = \int_{ \Gamma }  F(\varrho) d\mu(\varrho)\int_{ \Gamma }   \overline{Q(\varrho(s)} \overline{F(\varrho)} d\mu(\varrho).
 \endaligned
 $$
 Using this, a little computing shows that for all bounded analytic functions $f$ on $\D$ we have
 $$\aligned   (T_{\nu_{s}} f)(z) & =  \int_{\D}   f(w) K_{\D}(z, w)d\nu_{s}(w) \\   & = 
 \frac{1}{\pi}\sum_{j=0}^{+\infty}  (j+1) z^{j} \int_{ \D}  f(w) \overline{w}^{j} d\nu_s(w)\\
 & =  \frac{1}{\pi}\sum_{j=0}^{+\infty}(j+1) z^{j}\int_{\Gamma} f\left(\frac{\varrho(s)}{2+ |s|_\Gamma}\right) \left(\frac{\overline{\varrho(s)}}{2(1+ |s|_\Gamma)}\right)^{j}|F(\varrho)|^2 d\mu(\varrho) \\
 & =  \frac{1}{\pi}\int_{\Gamma} f\left(\frac{\varrho(s)}{2(1+ |s|_\Gamma)}\right) F(\varrho) d\mu(\varrho) \sum_{j=0}^{+\infty}  (j+1) \int_{\Gamma}  \left(\frac{z\overline{\varrho(s)}}{2(1+ |s|_\Gamma)}\right)^{j}\overline{F(\varrho)} d\mu(\varrho) \\
 & =  \int_{\Gamma} f\left(\frac{\varrho(s)}{2(1+ |s|_\Gamma)}\right) F(\varrho) d\mu(\varrho)  \int_{\Gamma}  K_\D\left(z, \frac{\varrho(s)}{2(1+ |s|_\Gamma)}\right) \overline{F(\varrho)} d\mu(\varrho). 
 \endaligned 
 $$
 Hence $T_{\nu_{s}} f$ belongs to the linear span of the element
 $$ z \mapsto \int_{\Gamma}  K_\D\left(z, \frac{\varrho(s)}{2(1+ |s|_\Gamma)}\right) \overline{F(\varrho)} d\mu(\varrho)$$
 of $A^2(\D).$ 
In particular, if $\nu_s(\D)\not = 0,$ then $$\aligned   (T_{\nu_{s}} f)(z)  & =  \int_{\Gamma} f\left(\frac{\varrho(s)}{2(1+ |s|_\Gamma)}\right) F(\varrho) d\mu(\varrho)  \int_{\Gamma}  K_\D\left(z, \frac{\varrho(s)}{2(1+ |s|_\Gamma)}\right) \overline{F(\varrho)} d\mu(\varrho) 
 \\
 & =  \frac{\int_{\Gamma} f\left(\frac{\varrho(s)}{2(1+ |s|_\Gamma)}\right) F(\varrho) d\mu(\varrho)}{\int_{\Gamma}  F(\varrho) d\mu(\varrho)}\int_{\D}    K_{\D}(z, w)d\nu_{s}(w)\\
 & =  \frac{\int_{\D} f(w)d\nu_s(w)}{ \nu_s(\D)} \int_{\D}    K_{\D}(z, w)d\nu_{s}(w)
 \endaligned 
 $$
This completes the proof.
     
    \end{proof}

 \begin{lemma}\label{one-rank}  Suppose that $ \mu$ satisfies the covariance equation with respect to $F$.
 Then the 
 
 Then the following are equivalent.
 \begin{enumerate} 
 \item  For all $s \in \S$, the Toeplitz operator $T_{\nu_{s}}$ induced by  $\nu_s$ is of rank  $1$.
 \item  There exists $s \in \S$ such that Toeplitz operator $T_{\nu_{s}}$ induced by  $\nu_s$ is of rank  $1$.   
  \item $F\mu \not = 0.$ 
  \item $ \int_{\Gamma}  F(\varrho) d\mu(\varrho)  \not = 0.
      $
       \item $ \int_{\Gamma}  \overline{F(\varrho)} d\mu(\varrho)  \not = 0.
      $
      \item $ \int_{\Gamma}  F(\varrho) d\mu(\varrho)  \not = 0$ and $
    \int_{\Gamma}   \overline{F(\varrho)}  d\mu(\varrho)  \not = 0. 
      $ 
  \item  $
    \int_{\Gamma}  |F(\varrho)|^2 d\mu(\varrho)  \not = 0. 
      $ 
     \item  For all $s \in \S$,  we have $
    \int_{\Gamma}  |F(\varrho)|^2 d\mu(\varrho)  \not = 0, 
      $
     and there is a unique complex number $a_s \in  \D $ such that
      \begin{eqnarray}\label{diracs} \nu_s &:=   \int_{\Gamma}  |F(\varrho)|^2 d\mu(\varrho)   \delta_{a_s}
       \end{eqnarray}
       where $ \delta_{a_s}$ is the Dirac mass at $a_s.$
              \end{enumerate} 
   \end{lemma}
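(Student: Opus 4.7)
The plan is to split the eight conditions into an operator-theoretic cluster $\{(1),(2),(7),(8)\}$, handled by Lemma~\ref{Toeplitz} combined with Luecking's theorem, and an analytic cluster $\{(3),(4),(5),(6)\}$, handled by the covariance identity at the identity together with a Stone--Weierstrass argument; the bridge between the two clusters is $(6)\Leftrightarrow(7)$, obtained by specializing the covariance equation to $s=t=e$.

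For the first cluster I observe that Lemma~\ref{Toeplitz} forces $\mathrm{rank}\,T_{\nu_s}\le 1$, and Luecking's theorem (rank equals the number of atoms of $\nu_s$) then restricts $\nu_s$ to be either zero or a single nonzero Dirac $c_s\delta_{a_s}$; taking $f\equiv 1$ in the definition of $\nu_s$ identifies $c_s=\nu_s(\overline{\D})=\int|F|^2\,d\mu$. This immediately yields $(2)\Rightarrow(7)$ (rank one forces $c_s\ne 0$), $(7)\Rightarrow(8)$ (nonzero total mass forces each $\nu_s$ to be a nonzero Dirac with the stated coefficient and a unique atom $a_s$), $(8)\Rightarrow(1)$ (Dirac mass = rank one), and $(1)\Rightarrow(2)$ is trivial. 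Evaluating the covariance equation at $s=t=e$ (with $\mu(\Gamma)=1$) gives the bridge
\begin{equation*}
\int|F|^2\,d\mu \;=\; \int F\,d\mu\cdot\int\bar F\,d\mu ,
\end{equation*}
hence $(7)\Leftrightarrow(6)$; the implications $(6)\Rightarrow(4),(5)$ and $(4),(5)\Rightarrow(3)$ are immediate.

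The main obstacle is the reverse implication $(3)\Rightarrow(4)$ (the argument for $(3)\Rightarrow(5)$ is the same with the roles of $g(e)$ and $h(e)$ interchanged). Set $g(s):=\int\varrho(s)F\,d\mu$, $h(t):=\int\overline{\varrho(t)}\bar F\,d\mu$, and $A(s,t):=\int\varrho(s)\overline{\varrho(t)}|F|^2\,d\mu$; the covariance equation factorizes the moment function as $A(s,t)=g(s)h(t)$. Since $\{\varrho(s)\overline{\varrho(t)}:s,t\in\S\}$ generates a self-conjugate, point-separating subalgebra of $C(\Gamma)$, Stone--Weierstrass yields $|F|^2\mu=0$ iff $A\equiv 0$ iff $g\equiv 0$ or $h\equiv 0$; equivalently, $F\mu\ne 0$ iff both $g$ and $h$ are not identically zero. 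Assume for contradiction that $F\mu\ne 0$ and $g(e)=\int F\,d\mu=0$: the bridge forces $\int|F|^2\,d\mu=0$, and the first cluster then forces $\nu_s=0$ for every $s\in\S$. The vanishing of each $(k,l)$-moment of $\nu_s$ translates, via the factorization, into $g(s^k)h(s^l)=0$ for all $s,k,l$. If $h(e)\ne 0$, the case $(k,l)=(1,0)$ gives $g\equiv 0$, contradicting $F\mu\ne 0$; hence $h(e)=0$ as well. Choose then $s_0,t_0\in\S$ with $g(s_0)\ne 0$ and $h(t_0)\ne 0$, and invoke the involution $*$ of $\S$ (in force throughout Section~2, so that under the identification $\varrho\leftrightarrow\eta_\varrho$ one has $\varrho(s^*)=\overline{\varrho(s)}$) to form $u:=s_0 t_0^*\in\S$. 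Then $\varrho(u)=\varrho(s_0)\overline{\varrho(t_0)}$, so the $(1,0)$-moment of $\nu_u$ equals, up to a positive constant, $A(s_0,t_0)=g(s_0)h(t_0)\ne 0$, contradicting $\nu_u=0$. This proves $(4)$, and $(5)$ follows symmetrically. The crux is this last step, where the involutive structure of $\S$ is essential in order to realize the cross moment $A(s_0,t_0)$ as a pushforward moment and so to extract a contradiction with $\nu_s=0$.
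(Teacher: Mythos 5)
Your architecture is the same as the paper's: Lemma~\ref{Toeplitz} plus Luecking's theorem governs the cluster $(1),(2),(7),(8)$, the evaluation of (\ref{pp}) at $s=t=e$ gives $(7)\Leftrightarrow(6)$, and $(6)\Rightarrow(4),(5)\Rightarrow(3)$ is trivial. All of that part of your write-up is correct. You have also put your finger on the real issue: some implication \emph{out of} $(3)$ (or $(4)$, $(5)$) is needed to close the cycle, and this is exactly the implication the Main Theorem's proof later consumes, since its hypothesis is $F\mu\neq 0$ while it invokes conclusion $(8)$. (For what it is worth, the paper's own proof only records $1\to2\to3$, $1\to8\to7\to6\to4,5\to3$, and never proves any implication leaving $(3)$, so the chain is not closed there either.)

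However, your argument for $(3)\Rightarrow(4)$ has a genuine gap in its final step. In the sub-case $\int F\,d\mu=\int\overline F\,d\mu=0$ you form $u:=s_0t_0^\ast$ and assert $\varrho(u)=\varrho(s_0)\overline{\varrho(t_0)}$ for all $\varrho\in\Gamma$. No such element of $\S$ exists in the setting of the Main Theorem: there $\S$ is a bare commutative semigroup with identity and $\S'$ is the set of \emph{all} non-zero multiplicative functions, with no compatibility between any involution and complex conjugation. Even in Section~2, the identification $\varrho\mapsto\eta_\varrho$ with $\eta_\varrho(s,t)=\varrho(s)\overline{\varrho(t)}$ is only consistent with the involution $(s,t)^\ast=(t,s)$ on $\Sde$, i.e.\ with the trivial involution on $\S$, under which $\varrho(t_0^\ast)=\varrho(t_0)\neq\overline{\varrho(t_0)}$ in general. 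Concretely, for $\S=(\N_0^d,+)$ and $\Gamma\subset\C^d$ (the case needed for Proposition 1 and Corollary 2) there is no $u\in\N_0^d$ with $z^{u}=z^{\kappa}\bar z^{\lambda}$ on $\Gamma$. The obstruction is structural: the measures $\nu_s$ only encode the values $\widehat{|F|^2\mu}(s^k,s^l)$ on ``diagonal'' pairs, so the mixed quantity $A(s_0,t_0)=g(s_0)h(t_0)$ with $s_0\neq t_0$ is simply not a moment of any single $\nu_u$, and your contradiction does not materialize. To rule out this sub-case you would need a genuinely two-variable device --- e.g.\ push $|F|^2\mu$ forward under $\varrho\mapsto(\varrho(s_0),\varrho(t_0))$ and invoke a finite-rank theorem for Toeplitz operators on the bidisc, or some other argument showing that $\widehat{|F|^2\mu}\equiv 0$ on all of $\Sde$; the one-disc Luecking theorem used here does not reach it.
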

 \begin{proof}   The facts that  $(1) \Rightarrow (2)$, $ (6)  \Rightarrow (4) \Rightarrow(3)$,  
 $ (6)  \Rightarrow (5) \Rightarrow(3)$ and $(8) \Rightarrow (7)$ are obvious.  That $(7) \Rightarrow (6)$ follows from (\ref{pp}) The proof of Lemma \ref{Toeplitz} shows that 
 $(2) \Rightarrow (3)$.  
  Finally, assume that $(1)$ holds. By Luecking's theorem there are complex numbers $m_s \in \C\setminus\{0\}$ and  $a_s \in  \D $ such that
 $$\nu_s =   m_s  \delta_{a_s}.$$
 Therefore,
 $$\int_{\Gamma}  |F(\varrho)|^2 d\mu(\varrho) = \nu_s(\D) = m_s \not =0$$
 showing that $(8)$ holds.
This completes the proof of the lemma.
  \end{proof} 
    We are ready now to prove the Main Theorem.
 \begin{proof-mthm}  {\rm First we prove that $(1)  \Rightarrow (3)$. Assume that $\mu$ and $F$ satisfy the hypothesis  $(1)$ of the Main Theorem. Without loss of generality we may  assume that    $\mu(\Gamma)  = 1.$  If   $ \mu$ satisfies the covariance equation with respect to $F$, then by  Lemma \ref{one-rank} we see that 
 $$  \int_{\D}  f(z) d\nu_s(z) = f(a_s) \int_{\Gamma}  |F(\varrho)|^2 d\mu(\varrho),  $$
 for all  $s\in \S$ and continuous functions on  $\D)$.  
 Setting
 $$\gamma(s) := 2(1+|s|_\Gamma)a_s, \ \ s \in \S,$$
 this yields
 $$  \int_{\Gamma}  f(\varrho(s)) |F|^2(\varrho) d\mu(\varrho) = f(\gamma(s)) \int_{\Gamma}  |F(\varrho)|^2 d\mu(\varrho),  $$
 for all  $s\in \S$ and continuous functions on  $\C$.  Taking $f(z) =   z$  and applying (\ref{pp}) yields
 $$\aligned 
  \gamma(s)  & = \frac{\int_{\Gamma}  \varrho(s) |F|^2(\varrho) d\mu(\varrho)}{ \int_{\Gamma}  |F(\varrho)|^2 d\mu(\varrho)}\\
  & = \frac{\int_{\Gamma}  \varrho(s) F(\varrho) d\mu(z)}{ \int_{\Gamma}  F(\varrho) d\mu(\varrho)}
 \endaligned
 $$
 for all $s\in \S.$  Taking $f(z) =  \bar z$  and applying (\ref{pp}) yields

 $$\aligned 
 \overline{ \gamma(s) } & = \frac{\int_{\Gamma}  \overline{\varrho(s)} |F|^2(\varrho) d\mu(\varrho)}{ \int_{\Gamma}  |F(\varrho)|^2 d\mu(\varrho)}\\
  & = \frac{\int_{\Gamma} \overline{\varrho(s)}  F(\varrho) d\mu(z)}{ \int_{\Gamma}  \overline{F(\varrho)} d\mu(\varrho)}
 \endaligned
 $$
 for all $s\in \S.$  Using this and applying again (\ref{pp}) a little computing shows that for all polynomials $P, Q \in\C[z]$ and pairs $(s, t) \in \S \times \S,$ we have 
 $$  \int_{\Gamma}  P(\varrho(s))  \overline{Q(\varrho(t))}|F|^2(\varrho) d\mu(\varrho) = P(\gamma(s)) \overline{Q(\gamma(t))}\int_{\Gamma}  |F(\varrho)|^2 d\mu(\varrho),  $$
 Now by the Stone-Weierstrass theorem this implies that  
 $$  \int_{\Gamma}  f(\varrho)|F|^2(\varrho) d\mu(\varrho) = f(\gamma)\int_{\Gamma}  |F(\varrho)|^2 d\mu(\varrho),  $$
 for all continuous functions $f$ on $\Gamma \cup\{\gamma\}.$ Hence $\gamma \in \Gamma$ and 
 $$|F|^2\mu = \left(  \int_{\Gamma} |F|^2(\varrho) d\mu(\varrho) \right) \delta_\gamma.$$
 This proves that part $(3)$   of the theorem holds. The proof of the converse if straightforward. Finally, it is obvious  to see that the fact 
 $(3) \Rightarrow (2)  \Rightarrow (1)$ is true,  which completes the proof.}
 
 \end{proof-mthm}

Next we prove Corollary  \ref{main-corollary}. 
\begin{proof}  Follows by  taking the constant function $F\equiv 1$ in the main theorem. This completes the proof.
\end{proof}

The proof of Corollary 2 will be shifted to the next section.
\section{Applications to classical semigroups}
  Although the theory works for general semigroups, we limit ourselves concrete semigroups to illustrate the results. There are many interesting examples of such semigroups for which the multiplicative functions are explicit  \cite{BCR}. 
  
  \subsection{ The semigroup $(\N_{0}^{d}, +)$ }  For a positive integer $d$ the  semi-group $(\N_{0}^{d}, +)$ is the set of all multi-indices $m= (m_1, \cdots, m_d)$, where the entries are nonnegative integers, equipped the standard addition.  The case $d=1$ corresponds to the setting of the question posed in the 
   paper \cite{EKKMR} asking for the description of those signed measures $\mu$ on the closed unit disc $\cD$  that satisfy the equality
\begin{eqnarray}\label{ekkmr} \mu(\cD) \int_{\cD} z^{n} {\bar z}^m d\mu(z) =  \int_{\cD} z^{n} d\mu(z) \int_{\cD}  {\bar z}^m d\mu(z)\end{eqnarray}
for all $m, n$  in the set $\N_{0}$ of all non-negative integers.  A direct application of our main result  solves this problem even in a more general abstract setting. Indeed, 
\begin{proposition}\label{Nd}  Let $d$ be a positive integer. Suppose that $F$ is continuous function on $\C^d$ and  $\mu$ is a compactly supported complex Borel measure on $\C^d$ such that If $\mu(\C^d)  \not = 0 $ and $F\mu  \not = 0, $ then  the following are equivalent

\begin{enumerate}
\item  $\mu$ satisfies the covariance equation with respect to $F$.
\begin{eqnarray}\label{main-equality3}
\mu(\C^d) \int_{\C^d} z^{n} {\bar z}^m |F(z)|^2d\mu(z) =  \int_{\C^d} z^{n} F(z)d\mu(z) \int_{\C^d}  {\bar z}^m \overline{F(z)}d\mu(z)\end{eqnarray}
for all multi-indices $m, n \in \N_{0}^d$
\item There are a constant $c\in \C\setminus \{0\}$ and a vector $ \zeta \in \C^d$ such that $F(\zeta) \not = 0$ such that $\mu = c \delta_\zeta.$
\end{enumerate}
\end{proposition}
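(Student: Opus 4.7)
The plan is to derive Proposition \ref{Nd} as an immediate specialization of the Main Theorem to the additive semigroup $\S = (\N_0^d, +)$ with identity $e = 0$. The only real content is the identification of the space $\S'$ of non-zero multiplicative functions with $\C^d$, after which (\ref{main-equality3}) becomes a literal rewriting of (\ref{main-equality}).

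First I would describe $\S'$ concretely. A non-zero multiplicative function $\varrho : \N_0^d \to \C$ satisfies $\varrho(m+n) = \varrho(m)\varrho(n)$ and $\varrho(0) = 1$, so it is determined by its values $z_j := \varrho(e_j)$ on the standard generators $e_j \in \N_0^d$. A short induction gives $\varrho(m) = z_1^{m_1}\cdots z_d^{m_d} = z^m$ for every multi-index $m$. The map $\varrho \mapsto z = (z_1,\ldots,z_d)$ is then a homeomorphism from $\S'$ onto $\C^d$, since the pointwise convergence topology on $\S'$ corresponds to coordinatewise convergence on $\C^d$ (convergence of $\varrho(e_j)$ for each $j$ already forces convergence of all $\varrho(m)$).

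Under this homeomorphism, a compactly supported complex regular Borel measure $\mu$ on $\S'$ transfers to a compactly supported complex regular Borel measure on $\C^d$ with the same compact support $\Gamma$, a continuous function $F$ on $\Gamma$ transports unchanged, and the hypotheses $\mu(\S') \neq 0$, $F\mu \neq 0$ become $\mu(\C^d) \neq 0$, $F\mu \neq 0$. Substituting $s = n$ and $t = m$ in (\ref{main-equality}) and using $\varrho(s) = z^n$ and $\overline{\varrho(t)} = \bar z^m$, the covariance equation with respect to $F$ is rewritten exactly as (\ref{main-equality3}). The Main Theorem then yields the equivalence directly: (\ref{main-equality3}) holds for all $(m,n) \in \N_0^d \times \N_0^d$ if and only if $\mu = c\delta_\zeta$ for some $c \in \C \setminus\{0\}$ and some $\zeta \in \Gamma \subset \C^d$ with $F(\zeta) \neq 0$, which is statement (2).

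There is essentially no obstacle here; the proposition is a dictionary translation of the Main Theorem, the only verification being the coordinate description of $\S'$ for the additive semigroup $\N_0^d$. If desired, the explicit formulas (\ref{cgamma}) can be used to read off $c = \mu(\C^d)$ and the $j$-th coordinate $\zeta_j$ of $\zeta$ as $\int_{\C^d} z_j F(z)\, d\mu(z) \big/ \int_{\C^d} F(z)\, d\mu(z)$, making the identification of the atom fully concrete.
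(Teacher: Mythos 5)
Your proposal is correct and follows essentially the same route as the paper: identify $\S'$ for $\S=(\N_0^d,+)$ with $\C^d$ via $\varrho_z(m)=z^m$, check that the identification is topological, and then invoke the main result. The only difference is that you correctly apply the Main Theorem (which handles general $F$), whereas the paper's text says to apply Corollary \ref{main-corollary} --- the $F\equiv 1$ special case --- which appears to be a slip in the paper; your version of the citation is the right one.
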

\begin{proof}  As mentioned before, we consider the finitely generated abelian semigroup $\S = (\N_{0}^d, +)$ with neutral element $0.$  It is not hard to see the corresponding semigroup $\S'$ can be identified algebraically and topologically  with $\C^d$ following the correspondence 
$$\varrho_z(m) = z^m,   \ \ m \in \N_{0}^d,  \ z \in \C^d$$
with the understanding that $0^0 = 1.$ To complete the proof, it suffices to apply Corollary \ref{main-corollary} to this semigroup.
\end{proof}

\begin{remark}{\rm  Under the assumptions of Proposition \ref{Nd}, if $F\equiv 1$ and $\mu(\C^d)  \not = 0, $ 
  then $\mu$ satisfies (\ref{main-equality3}) if and only if there is  a nonzero constant $c \in \C$ such that $\mu = c\delta_\zeta$ where
  $$\zeta := \frac{1}{\mu(\C^d) } \left(\int_{\C^d}z_1d\mu(z), \cdots, \int_{\C^d}z_d  d\mu(z)\right).$$
  Hence the reduction of this result to the case $d=1$ and  $F\equiv 1$ solves the problem (\ref{ekkmr}) posed by \cite{EKKMR}.}

\end{remark}

Now we prove Corollary 2.
\begin{proofcor2}{\rm   Suppose that  $Y$ is an integrable   random variable with expectation $\E[Y] \not = 0.$ Let $X$ be   a bounded  complex random  vector and  which satisfies (\ref{xmn}).   We may assume that  the complex random vector $X$  takes its values inside  a compact $\Gamma$ of $\C^d$.  Consider the  compactly supported complex Borel $\mu$ on $\C^d$ defined by
\begin{eqnarray}\label{proba1}
\int_{\C^d} f(z)d\mu(z) =  \E[f(X)Y]\end{eqnarray}
for all continuous function on  $\C^d$  supported in $\Gamma$.  Then $\mu(\C^d) = \E[Y] \not = 0$ and $\mu$ satisfies
(\ref{main-equality3}) with $F\equiv 1$ so that by Proposition \ref{Nd} we see that for some vector $\zeta\in \C^d$ and a nonzero constant $c$ we have $\mu = c \delta_\zeta.$ Hence
$$\E[f(X)Y] = c f(\zeta)$$
for all  bounded Borel measurable functions $f$ on $\C^d.$  Denote by $\E[Y|X=x]$ the conditional expectation of $Y$ given $X=z$. Then the latter equality implies that
$$ \int_{\C^d} \E(Y| X = z) f(x) dP_X(z) = c f(\zeta)$$
for all bounded Borel measurable functions $f$ on $\C^d$ showing that 
$$dP_X = \delta_\zeta \ \ \ \text{and} \ \ \E(Y| X = \zeta) = c = \E(Y).$$
The converse is obvious, completing the proof.}
\end{proofcor2}

 \subsection{ The multiplicative 
semi-group $(\N, .)$ } This is the set of all positive integers equipped the natural product with neutral element $1.$ 
Let $(p_{j})_{j\in \N}$ be the sequence of primes. The fundamental theorem of  arithmetic ensures that each  $n\in \N$ admites a unique representation $n = p_{1}^{n_1} \cdots p_{l}^{n_l}, $  where $l \in \N$ and $n_1 \cdots n_l\in \N_0.$ we set $\kappa(n) := (n_1, \cdots,  n_l)$ and for  each sequence $z =(z_j)_{j\in \N},$  of complex numbers, we write 
$$z^{\kappa(n)} := \Pi_{j=1}^l z_{j}^{n_{j}}.$$

 It is not hard to see that the topological semigroup $(\N, .)'$ of multiplicative  functions on  $(\N, .)$  can be identified algebraically and topologically with $ \Castinfty := \C^{\N}$ be furnished with product topology
 following the correspondence 
$$\varrho_z(n) = z^{\kappa(n)},   \ \ n \in \N,  \ z \in \Castinfty.$$
  The reduction of our main result to this semigroup gives the following.
  
   \begin{proposition}\label{Nproduit}  Let $F: \Castinfty \to \C$ be a continuous function and let  $\mu$ be a  compactly supported complex regular Borel measure  on $\Castinfty $  such  that $\mu(\Castinfty)  \not = 0$ and $F\mu \not = 0.$  Then $\mu$ satisfies the covariance equaltion
\begin{eqnarray}\label{main-equalityNproduit}
\mu(\Castinfty) \int_{\Castinfty} z^{\kappa(n)} {\bar z}^{\kappa(m)} |F(z)|^2d\mu(z) =  \int_{\Castinfty} F(z) z^{\kappa(n)} d\mu(z) \int_{\Castinfty}   \overline{ z^{\kappa(m)}F(z)} d\mu(z)\end{eqnarray}
for all multi-indices $m, n \in \N$ if and only if there are a nonzero constant $c$ and an element $\zeta $ of $\Castinfty$ such that $F(\zeta) \not = 0$ and $\mu = c \delta_\zeta.$

\end{proposition}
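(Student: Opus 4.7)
The plan is to reduce Proposition \ref{Nproduit} to the Main Theorem (or equivalently to Corollary \ref{main-corollary}) by exhibiting an explicit homeomorphism between the semicharacter space of $(\N,\cdot)$ and $\Castinfty$, exactly in the spirit of the proof of Proposition \ref{Nd}. The whole work is thus concentrated in identifying $(\N,\cdot)'$ with $\Castinfty$ and transferring the covariance equation along that identification; once this is done, the conclusion is immediate.

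First I would describe the non-zero multiplicative complex-valued functions on $(\N,\cdot)$. Since $\N$ is multiplicatively generated by the primes and $\varrho(1)=1$ for every $\varrho\in (\N,\cdot)'$, the fundamental theorem of arithmetic shows that $\varrho$ is completely determined by the sequence $z=(\varrho(p_j))_{j\in\N}\in\Castinfty$, and conversely every such sequence defines a multiplicative function via $\varrho_z(n):=z^{\kappa(n)}$ with the convention $0^0=1$. Thus the map $\Phi:\Castinfty\to(\N,\cdot)'$, $z\mapsto \varrho_z$, is a bijection. Moreover, since a net $z^{(\alpha)}$ converges in $\Castinfty$ (product topology) to $z$ if and only if each coordinate converges, and since $\varrho_z(n)$ depends continuously on finitely many coordinates of $z$, we see that $\Phi$ is a homeomorphism for the pointwise-convergence topology on $(\N,\cdot)'$.

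Next I would push forward. Given $\mu$ compactly supported on $\Castinfty$ with $\mu(\Castinfty)\neq 0$ and $F\mu\neq 0$, set $\tilde\mu:=\Phi_{*}\mu$ and $\tilde F:=F\circ\Phi^{-1}$. Then $\tilde\mu$ is a compactly supported complex regular Borel measure on the semicharacter space with $\tilde\mu((\N,\cdot)')=\mu(\Castinfty)\neq 0$ and $\tilde F\tilde\mu\neq 0$. Under the change of variable $\varrho=\varrho_z$, one has $\varrho(n)=z^{\kappa(n)}$ and $\overline{\varrho(m)}=\overline{z^{\kappa(m)}}$, so equation (\ref{main-equalityNproduit}) becomes exactly the covariance equation (\ref{main-equality}) for $\tilde\mu$ and $\tilde F$ on the semigroup $\S=(\N,\cdot)$. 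Applying the Main Theorem then gives a constant $c\in\C\setminus\{0\}$ and a semicharacter $\gamma\in(\N,\cdot)'$ with $\tilde F(\gamma)\neq 0$ such that $\tilde\mu=c\,\delta_\gamma$. Pulling back via $\Phi^{-1}$ and setting $\zeta:=\Phi^{-1}(\gamma)\in\Castinfty$ yields $\mu=c\,\delta_\zeta$ with $F(\zeta)\neq 0$. The converse direction is a direct substitution into (\ref{main-equalityNproduit}).

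I do not expect any genuine obstacle, since the Main Theorem is already proved in the abstract setting. The only delicate point worth verifying carefully is the homeomorphism $\Phi$: I must make sure that the compact support of $\mu$ in $\Castinfty$ corresponds to a compact support in $(\N,\cdot)'$ (this is automatic once $\Phi$ is continuous), and that the convention $0^0=1$ is handled consistently so that sequences $z\in\Castinfty$ with some vanishing coordinates still produce bona fide elements of $(\N,\cdot)'$, i.e. non-identically-zero multiplicative functions, which is the case because $\varrho_z(1)=1$ regardless of $z$.
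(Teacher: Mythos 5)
Your proposal is correct and follows exactly the route the paper intends: the paper states that $(\N,\cdot)'$ is identified algebraically and topologically with $\Castinfty$ via $\varrho_z(n)=z^{\kappa(n)}$ and then simply declares the proposition to be the reduction of the Main Theorem to this semigroup. You have merely written out the details of that identification and the transfer of the covariance equation, which the paper leaves implicit.
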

\begin{remark}{\rm  Under the assumptions of Proposition \ref{Nproduit}, if $F$ is the constant function $1$ and $\mu(\Castinfty)  \not = 0, $ then
  then $\mu$ satisfies (\ref{main-equalityNproduit}) if and only if there is  a constant $c \in \C$ such that $\mu = c\delta_\zeta$ where
  $$\zeta := \frac{1}{\mu(\Castinfty) } \left(\int_{\Castinfty}z_1d\mu(z), \cdots, \int_{\Castinfty}z_d  d\mu(z)\right).$$}
 
\end{remark}

 \subsection{ The semigroup $\S = ([0, +\infty), +)$ } The continuous multiplicative on this semigroup are form $\varrho_z(s) = e^{sz}, z\in \C$.
 Consider the subset  of  $\S' $ given by
 $$\Gamma := \{s \mapsto e^{-sz}, z\in \C,  \ \text{Re } z \geq 0 \} \cup \{\infty\}$$ 
 with the understanding that $\varrho_\infty (s) = 1_{\{0\}}(s), s \geq 0.$ Then $\Gamma$ is a compact subset 
 of  $\S'.$ Furthermore, complex regular Borel measures on 
 $\Gamma \setminus  \{\infty\}$ are in a bijective correspondence with finite complex regular measure on the right halfplane  $\H =  \{z\in \C,  \ \text{Re } z \geq 0 \}.$ 
 Hence every finite complex regular measure $\mu$ on $\H$ induces  a compactly supported finite complex regular Borel measure $\nu$ on $\Gamma$  defined by its generalized Laplace transform
 $$ \widehat \nu (s, t) = \int_{\Gamma \setminus  \{\infty\} } \varrho(s) \overline{\varrho(t)}d\nu(\varrho) =   \int_{\H} e^{-sz - t\bar z} d\mu(z),  \ (s, t) \Sde.$$
 Applying the main theorem to this semigroup yields   the following
   \begin{proposition}\label{H}  Let $F: \H \to \C$ be a continuous function and let  $\mu$ be a finite complex Borel measure  on $\H $  such  that $\mu(\H)  \not = 0$ and $F\mu \not = 0.$  Then $\mu$ satisfies the covariance equation with respect to $F$
\begin{eqnarray}\label{main-equalityH}
\mu(\H) \int_{\H} e^{-sz - t\bar z }|F(z)|^2d\mu(z) =   \int_{\H} e^{-sz} F(z)d\mu(z)  \int_{\H} e^{ - t\bar z} \overline{F(z)}d\mu(z)   \end{eqnarray}
for all  $s, s\in [0, +\infty)$ if and only if there are a nonzero constant $c$ and an element $\zeta $ of $\H$ such that $F(\zeta) \not = 0$ and $\mu = c \delta_\zeta.$
\end{proposition}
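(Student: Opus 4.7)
The plan is to apply the Main Theorem to the additive semigroup $\S = ([0,+\infty), +)$ on the compact subset $\Gamma \subset \S'$ constructed just before the statement. The bulk of the work is translating the data $(\mu, F)$ on $\H$ into data $(\nu, \widetilde F)$ on $\Gamma$ and recognising (\ref{main-equalityH}) as the abstract covariance equation (\ref{main-equality}) for $(\nu, \widetilde F)$; the rest is immediate from the Main Theorem.

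First I would push $\mu$ forward to $\Gamma$ along the embedding $z \mapsto \varrho_z$, where $\varrho_z(s) = e^{-sz}$, obtaining a compactly supported complex regular Borel measure $\nu$ on $\Gamma$ with $\nu(\{\infty\}) = 0$ and $\nu(\Gamma) = \mu(\H) \neq 0$. Transporting $F$ along this embedding yields a continuous function on $\Gamma \setminus \{\infty\}$; since $\nu$ does not charge $\{\infty\}$, any Borel extension $\widetilde F$ to all of $\Gamma$ (for instance $\widetilde F(\infty) := 0$) leaves every integral appearing in (\ref{main-equality}) unchanged. If true continuity of $\widetilde F$ on $\Gamma$ is needed to invoke the Main Theorem as stated, I would first reduce to the case where $\mu$ has compact support in $\H$ via a routine truncation along an exhaustion of $\H$ by compact subsets, exploiting that the covariance equation is linear in the two appearances of $\mu$ on the right-hand side.

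Next, using $\varrho_z(s) = e^{-sz}$ and $\overline{\varrho_z(t)} = e^{-t\bar z}$, equation (\ref{main-equalityH}) reads exactly
\[
\nu(\Gamma)\, \widehat{|\widetilde F|^2\nu}(s,t) = \widehat{\widetilde F\nu}(s, 0)\, \widehat{\overline{\widetilde F}\nu}(0, t)
\]
for $s, t \in [0,+\infty)$, which is the covariance equation (\ref{main-equality}) for $(\nu, \widetilde F)$ on $\Gamma$, with identity $e = 0$. Applying the Main Theorem, there exist $c \in \C \setminus \{0\}$ and $\gamma \in \Gamma$ with $\widetilde F(\gamma) \neq 0$ such that $\nu = c\,\delta_\gamma$. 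Because $\nu(\{\infty\}) = 0$, necessarily $\gamma = \varrho_\zeta$ for a unique $\zeta \in \H$ with $F(\zeta) \neq 0$, and pulling back yields $\mu = c\,\delta_\zeta$. The converse is immediate: substituting $\mu = c\,\delta_\zeta$ into (\ref{main-equalityH}) gives $c^2\, e^{-s\zeta - t\bar\zeta}\,|F(\zeta)|^2$ on both sides. I expect the only genuine obstacle to be the handling of the point $\infty$ when extending $\widetilde F$, which the truncation argument above dispatches cleanly.
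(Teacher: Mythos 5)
Your argument is essentially the paper's own proof: the paper disposes of this proposition with the single phrase ``applying the main theorem to this semigroup yields the following,'' relying on exactly the identification $z\mapsto\varrho_z$, $\varrho_z(s)=e^{-sz}$, of finite measures on $\H$ with measures on $\Gamma\setminus\{\infty\}$ that you spell out, followed by pulling the conclusion $\nu=c\delta_\gamma$ back to $\mu=c\delta_\zeta$. The only shaky point is your fallback truncation remark --- the covariance equation is quadratic, not linear, in $\mu$ (the right-hand side is a product of two integrals against $\mu$), so restricting $\mu$ to compact subsets of $\H$ does not obviously preserve it --- but you offer this only as a contingency for the continuity of $\widetilde F$ at $\infty$, an issue the paper itself does not address at all.
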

\begin{remark}{\rm The semigroup $\S = ([0, +\infty), +)$ is a sample example of the so called
 {\it conelike semigroups}.  These are subsets $\S$ of finite dimensional vector spaces that are stable under addition, contain the origin $0$ 
and satisfy: for all $x \in \S,$ there is a real number $r_x \geq 0$  such that $ r x \in \S $ for all $r\geq r_x.$  They were introduced 
 by Ressel \cite{Re} in connection with Bochner's type theorem for topological semigroups. Using the work \cite{Y1}, these semigroups were shown  to be  of interest to holomorphic functions in tube domains \cite{Y2}.
 Finally,  our main theorem can be applied to derive results in the same spirit as Proposition \ref{H} in the case of these semigroups.}
\end{remark}

\section{Analytic kernels}

This section offers further applications of our results. We will establish a kind of extremal property  for analytic kernels.
For simplicity we consider the ball setting.  Consider and open $\B_d$ in $\C^{d} $ and a closed ball $\overline{\B}_p$ in $\C^{p} $ both centered at $0$ with positive radius. Assume that $K: \B_d \times \overline{ \B}_p \to \C$ be kernel of the form
\begin{eqnarray}\label{K}   K(z, w) = \sum_{m, n \in \N_{0}^d} a_{m, n} z^m  w^n, 
\end{eqnarray}
with nonzero complexe coefficients $a_{m, n}$, where the series is uniformly convergent   in the variable $w$ for all fixed  $z.$  We will prove the following 
\begin{theorem}\label{kernel}  Suppose that $K$ is a kernel of the form (\ref{K}) and let $f:\B_d \to \C$ be holomorphic function. Then a finite  complex Borel measure 
 $\mu$ on $\overline{\B}_p$  such that  $\mu(\overline{\B}_p) = 1$ satisfies  the equality
\begin{eqnarray}\label{kernel-equation1} |f(z)|^2 =   \int_{ \overline{\B}_p}|K(z, \overline{ w})|^{2} d\mu(w),
\end{eqnarray}
in a neighborhood of $0$ if and only if 
 there is are complex constant $c$ and an element $\zeta $ of $\B_p$ such that 
 $$\mu = c \delta_\zeta \ \ \text{and}\ \ f =  cK(., \bar \zeta)$$
\end{theorem}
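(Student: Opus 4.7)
The plan is to reduce the statement to Proposition \ref{Nd} (applied on $\C^p$, with $F \equiv 1$) by showing that $\mu$ satisfies the standard monomial covariance equation on $\overline{\B}_p$. First, since $|f(z)|^2 = \int |K(z, \bar w)|^2 d\mu(w)$ is real-analytic in $z$ on a neighborhood of the origin, treating $z$ and $\bar z$ as independent holomorphic variables and analytically continuing gives the polarized identity
\begin{equation*}
f(z)\, \overline{f(z')} = \int_{\overline{\B}_p} K(z, \bar w)\, \overline{K(z', \bar w)}\, d\mu(w)
\end{equation*}
for $z, z'$ in a neighborhood of $0 \in \B_d$. Writing $f(z) = \sum_m b_m z^m$ and $K(z, \bar w) = \sum_m z^m \alpha_m(\bar w)$ with $\alpha_m(\bar w) := \sum_n a_{m, n} \bar w^n$, matching the coefficient of $z^m \overline{z'}^{m'}$ on the two sides yields
\begin{equation*}
b_m \overline{b_{m'}} = \int \alpha_m(\bar w)\, \overline{\alpha_{m'}(\bar w)}\, d\mu(w), \qquad m, m' \in \N_{0}^d.
\end{equation*}

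Specializing this identity to $(m, m') = (0, 0),\ (m, 0),$ and $(0, m')$, and combining via the trivial rank-one relation $|b_0|^2 \,(b_m \overline{b_{m'}}) = (b_m \overline{b_0})(b_0 \overline{b_{m'}})$, produces the ``twisted'' covariance identity
\begin{equation*}
\Big(\int |\alpha_0|^2\, d\mu\Big) \Big(\int \alpha_m \overline{\alpha_{m'}}\, d\mu\Big) = \Big(\int \alpha_m \overline{\alpha_0}\, d\mu\Big)\Big(\int \alpha_0 \overline{\alpha_{m'}}\, d\mu\Big),
\end{equation*}
valid for all $m, m' \in \N_{0}^d$. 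Expanding each $\alpha_m$ in the monomial basis $\bar w^n$, varying $m, m'$ throughout $\N_{0}^d$, and invoking the non-vanishing of every coefficient $a_{m, n}$ to uncouple this $\alpha_m$-identity into an identity on the moments of $\mu$, I would extract
\begin{equation*}
\mu(\overline{\B}_p) \int w^s \bar w^t\, d\mu(w) = \Big(\int w^s\, d\mu(w)\Big)\Big(\int \bar w^t\, d\mu(w)\Big), \qquad s, t \in \N_{0}^p.
\end{equation*}
Since $\mu(\overline{\B}_p) = 1 \neq 0$, this is precisely the covariance equation with $F \equiv 1$ on $\C^p$, so Proposition \ref{Nd} (with $p$ in place of $d$) forces $\mu = \delta_\zeta$ for some $\zeta \in \overline{\B}_p$.

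Finally, substituting $\mu = \delta_\zeta$ into (\ref{kernel-equation1}) gives $|f(z)|^2 = |K(z, \bar\zeta)|^2$ on a neighborhood of $0$; since both $f$ and $K(\cdot, \bar\zeta)$ are holomorphic in $z$, the equality of their moduli forces $f = c K(\cdot, \bar\zeta)$ for some unimodular constant $c$, yielding the stated conclusion. The converse ($\mu = c\delta_\zeta$ and $f = cK(\cdot, \bar\zeta)$ imply (\ref{kernel-equation1})) is a direct substitution. The main obstacle in this plan is the decoupling step that extracts the monomial covariance equation from the twisted $\alpha_m$-identity: this is exactly where the hypothesis $a_{m, n} \neq 0$ enters, and it requires a careful linear-independence argument to ensure that varying $m, m' \in \N_{0}^d$ against the coefficient matrix $(a_{m, n})$ recovers the full moment covariance identity for $\mu$.
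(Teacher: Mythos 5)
Your overall route---polarize (\ref{kernel-equation1}), match Taylor coefficients against the expansion (\ref{K}), and reduce to Proposition \ref{Nd}---is exactly the paper's route, and your first three steps (the identity $b_m\overline{b_{m'}}=\int\alpha_m\overline{\alpha_{m'}}\,d\mu$ and the rank-one relation it inherits from the matrix $\bigl(b_m\overline{b_{m'}}\bigr)$) are a cleaner version of the coefficient identity the paper records. But the step you yourself flag as the ``main obstacle'' is a genuine gap, and it is not merely technical: entrywise nonvanishing of the $a_{m,n}$ does not suffice to decouple the Gram identity for the family $\{\alpha_m\}$ into the monomial covariance equation for $\mu$. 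Your twisted identity only says that the form $\langle u,v\rangle:=\int u\overline{v}\,d\mu$ has rank at most one on the linear span of the $\alpha_m$ (and even that requires $b_0=f(0)\neq 0$, which you do not address); to reach the moment identity you need the $\alpha_m$ to span all antiholomorphic monomials $\overline{w}^{\,n}$, a nondegeneracy condition on the matrix $(a_{m,n})$ much stronger than $a_{m,n}\neq 0$. Concretely, take $a_{m,n}\equiv 1$, so $K(z,w)=g(z)h(w)$ with $g(z)=\sum_m z^m$, $h(w)=\sum_n w^n$ and all $\alpha_m$ equal. For $\mu=\tfrac12(\delta_{\zeta_1}+\delta_{\zeta_2})$ with $\zeta_1\neq\zeta_2$ and $f=C^{1/2}g$ where $C=\int|h(\overline{w})|^2\,d\mu(w)>0$, equation (\ref{kernel-equation1}) holds although $\mu$ is not a point mass; so no decoupling argument can succeed under the hypotheses as stated.

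You should also know that the paper's own proof does not supply the missing step: it writes the coefficient identity as $b_p\overline{b_q}=a_{m,q}\overline{a_{p,n}}\int w^n\overline{w}^m\,d\mu(w)$ for all $m,n,p,q$, i.e.\ with the summation over the inner indices dropped, and then invokes Proposition \ref{Nd}; with that (incorrect) unsummed identity the reduction looks immediate, while the correct summed identity leads straight back to your decoupling problem. So your proposal faithfully reproduces the paper's argument and correctly isolates the one step that neither you nor the paper actually carries out; to make the theorem true one must add a spanning hypothesis on the rows $\bigl(a_{m,\cdot}\bigr)$, under which your plan does go through via Proposition \ref{Nd} applied on $\C^p$. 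Your closing steps are fine, and they also expose a slip in the statement itself: since $\mu(\overline{\B}_p)=1$ forces $c=1$ in $\mu=c\delta_\zeta$, the constant in $f=cK(\cdot,\overline{\zeta})$ cannot be the same $c$; it is only determined up to modulus one, as your argument shows.
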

\begin{proof} We expand $f$ in terms of a power series   $f(z) = \sum_{m\in \N_{0}^d} b_m z^m$ in a neighborhood of $0$.  If $\mu$ satisfies
 (\ref{kernel-equation1}), then by integration, uniqueness of the power series expansion and (\ref{K}) we see that
$$ b_p \overline{b_q} =   a_{m, q} \overline{a_{p, n} }\int_{ \overline{\B}_p} w^n \overline{w}^m d\mu(w),$$
for all $m, n, p, q  \in \N_{0}^d.$ The proof now follows from Proposition \ref{Nd}.
\end{proof} 
\section{concluding remarks}

\begin{remark}{\rm In view of Lemma 4.1 in \cite{Y1} we see that generalized Laplace transforms of nonnegative measure are preserved under surjective morphisms of semigroups. This, combined with Marserick's work \cite{M2},  shows that  generalized Laplace transforms of compactly supported complex regular Borel measures remain preserved under these mappings.  Below, we list a couple examples:

 Since the mapping $x \mapsto e^{x}$ is an isomorphism from  the additive $ \S= ([0, +\infty), +)$ onto the multiplicative  semigroup $ \T =  ([1, +\infty), .)$ The analog of Proposition \ref{H}  on $\T$ can be derived by a simple change of variable. 
 
 Every finitely generated abelian semigroup $\T$ with unit  is surjective image  of  $(\N_{0}^d, +, 0)$ for some $d\in \N,$ hence a similar observation shows an analog of Proposition \ref{Nd}  on $\T$ can be deduced i by change of variable.
}
\end{remark}


\begin{thebibliography}{99}

\bibitem {BM} {C. Berg, P. H. Maserick, {\it Exponentially bounded positive define functions, \/} Illinois J. Math. {\bf 28 }  (1984), 162-179}
\bibitem {BCR} {C. Berg, J. P. R. Christensen, P. Ressel, { \it  Harmonic Analysis on Semigroups, \/}
Graduate Texts in Math., Vol. 100, Springer-Verlag, Berlin/New York, 1984}
\bibitem{EKKMR} {O. El-Fallah, K. Kellay, H. Klaja, J. Mashreghi, T.Ransford,   {\it Dirichlet spaces with superharmonic weights and de Branges-Rovnyak spaces,\/} Complex Anal. Oper. Theory {\bf 10} (2016), no. 1, 97-107.} 
\bibitem{L} {D. Luecking,  {\it    Finite rank Toeplitz operators on the Bergman space} Proc. AMS {\bf 136} (2008), 1717--1723}
\bibitem{M1} {P. H. Maserick, {\it Moments of measures on convex bodies, \/} Pacific J. Math. {\bf 68} (1977),
135-152.}
\bibitem{M2} {P. H. Maserick, {\it BV-functions, positive-definite functions and moment problems, \/} Trans. Amer. Math. Soc.  {\bf 214} (975, 137-152}
\bibitem{M3} {P. H. Maserick, {\it  Moment and BV-functions on commutative semigroups, \/} Trans. Amer. Math. Soc. {\bf 181} (1973), 61-75 .}
 \bibitem{Re} P. Ressel, {\it  Bochner's theorem for finite-dimensional conelike semigroups, \/} Math. Ann. {\bf 296} (1993), no. 3, 431-440.
  \bibitem{Ru} W. Rudin, {\it Real and complex analysis,\/} McGraw-Hill, New York, 1966.
 \bibitem{Y1} { E. H. Youssfi, {\it Pull-back properties of moment functions and a generalization of Bochner-Weill's theorem, \/} Math. Ann. {\bf 300} (1994), 435-450.}
 \bibitem{Y2} {E. H. Youssfi,  {\it Harmonic analysis on conelike bodies and holomorphic functions on tube domains, \/}Journal of Functional analysis. {\bf 155} (1998), 381-435}
\end{thebibliography}
\end{document}